\documentclass[11pt]{amsart}
\usepackage{fullpage,amsmath,amsthm,amsfonts,amssymb,graphicx,amscd,float,hyperref,enumitem,setspace}

\title{$Out(F_3)$ index realization}

\newtheorem{thm}{Theorem}[section]

\newtheorem{qst}[thm]{Question}

\newtheorem{rk}[thm]{Remark}
\newtheorem{ex}[thm]{Example}
\theoremstyle{definition}

\newtheorem*{oldlem}{Lemma}

\begin{document}
\title{$Out(F_3)$ Index Realization}
\author{Catherine Pfaff}

\begin{abstract}
By proving precisely which singularity index lists arise from the pair of invariant foliations for a pseudo-Anosov surface homeomorphism, Masur and Smillie \cite{ms93} determined a Teichm\"{u}ller flow invariant stratification of the space of quadratic differentials. In this paper we determine an analog to the theorem for $Out(F_3)$. That is, we determine which index lists permitted by the \cite{gjll} index sum inequality are achieved by fully irreducible outer automorphisms of the rank-$3$ free group.
\end{abstract}

\maketitle

\section{Introduction}

We let $Out(F_r)$ denote the outer automorphism group of the rank-$r$ free group. In this paper we prove realization results for an outer automorphism invariant dependent only on the conjugacy class (within $Out(F_r)$) of the outer automorphism, namely the ``index list.'' This work is motivated both by the important role index lists have played in mapping class group theory and by the role they are already playing in studying the dynamics of the groups $Out(F_r)$.

The outer automorphism groups have been studied for many years. More recent developments have encouraged and enabled rapid analysis of deep relationships between the mapping class groups and the $Out(F_r)$. For a compact surface $\Sigma$, the \emph{mapping class group $\mathcal{MCG}(\Sigma)$} is the group of isotopy classes of orientation-preserving homeomorphisms of $\Sigma$. The relationship between the mapping class groups and $Out(F_r)$ is particularly visible in rank $2$, where there are even isomorphisms $Out(F_2) \cong Out(\pi_1(\Sigma_{1,1})) \cong \mathcal{MCG}(\Sigma_{1,1})$ for the one-holed torus $\Sigma_{1,1}$. It can be noted that even in higher ranks, many outer automorphisms are still induced by homeomorphisms of compact surfaces with boundary. For future reference, such outer automorphisms are called \emph{geometric}.

While not necessary for following the content of this paper, we first briefly explain indices in the mapping class group setting to orient the reader more familiar with surface theory. The index list is an important invariant of a ``pseudo-Anosov'' mapping class. Pseudo-Anosovs are the most common mapping class group elements (see for example \cite{m11}) and are characterized by having a representative leaving invariant a pair of transverse measured singular minimal foliations. In \cite{ms93} Masur and Smillie determined precisely which singularity index lists, permitted by the Poincar\'e-Hopf index formula, come from these invariant foliations of pseudo-Anosovs. The stratification they give of the space of quadratic differentials is not only invariant under the Teichm\"{u}ller flow, but has been extensively studied in papers such as \cite{kz03}, \cite{l04}, \cite{l05}, \cite{emr12}, and \cite{z10}.

The index list for a pseudo-Anosov can identically be viewed in terms of its invariant foliation or in terms of its dual $R$-tree. In fact, the singularities of the invariant foliation, lifted to the universal cover, are in one-to-one correspondence with the branchpoints of the dual $R$-tree. In the respective settings, the index list has an entry of $1-\frac{k}{2}$ obtained by counting the number $k$ of prongs at the singularity or the valence of the branch point. Alternatively, one can ascertain the index list from singularities of the expanding invariant lamination (obtained as the limit of any simple closed curve under repeated application of the pseudo-Anosov) or from the invariant train track. Much of this theory can be found in \cite{flp79}.

A ``fully irreducible'' (iwip) outer automorphism is the most commonly used analogue to a pseudo-Anosov mapping class. An element $\phi \in Out(F_r)$ is \emph{fully irreducible} if no positive power $\phi^k$ fixes the conjugacy class of a proper free factor of $F_r$. The index theory for automorphisms of free groups dates back to the work of Cooper \cite{c87}, Dyer and Scott \cite{ds75}, Gersten \cite{g87}, and later Bestvina and Handel \cite{bh92} in understanding the fixed point sets for an automorphism. This theory fully transformed into an index theory in \cite{gjll}. 

As with a pseudo-Anosov acting on Teichm\"{u}ller space, a fully irreducible acts with north-south dynamics \cite{ll03} on the natural compactification of Culler-Vogtmann Outer space \cite{cv86}. Both the attracting and repelling points for the action are $R$-trees, denoted respectively $T^+_{\phi}$ and $T^-_{\phi}$. The repelling tree is an extension, to nongeometric fully irreducibles, of the dual tree to the invariant foliation for a pseudo-Anosov. As with a pseudo-Anosov, the index list for a fully irreducible, as defined in \cite{hm11}, has an entry of $1-\frac{k}{2}$ obtained by counting the valence $k$ of the branch point. The index list can again also be computed from the expanding lamination of \cite{bfh97}. For a fully irreducible the lamination can be obtained by applying an automorphism in the class repeatedly to any generator, then taking the closure. The description of the index list we use here (explained in Section \ref{s:Dfs}) uses the ``train track representative'' proved to exist for a fully irreducible in \cite{bh92}.

While the $Out(F_r)$ groups resemble mapping class groups, there is added depth to the $Out(F_r)$. A particularly good example of this arises when trying to generalize the Masur-Smillie pseudo-Anosov index theorem to nongeometric fully irreducibles. One facet of this depth is expanded upon in \cite{p12b}, \cite{p12c}, and \cite{p12d}, where we show that, unlike with pseudo-Anosovs, where the ideal Whitehead graph can be determined by the singularity index list, the ideal Whitehead graph actually gives a finer invariant of a fully irreducible giving, in particular, more detailed behavior of the lamination at a singularity. In this paper we focus on the fact that, instead of being restricted by an index sum equality, such as the Poincar\'e-Hopf index equality, the index sum for a fully irreducible is only restricted by an inequality. Gaboriau, J\"aeger, Levitt, and Lustig proved in \cite{gjll} that each fully irreducible $\phi \in Out(F_r)$ satisfies that index sum inequality $0 > i(\phi) \geq 1-r$. Here we revise their index definition to be invariant under taking powers and to have the sign consistent with the mapping class group case. If one takes an adequately high power (the ``rotationless power'' of \cite{fh11}), the definitions differ only by a sign change.

Index lists of geometric fully irreducibles are understood by the Masur-Smillie index theorem. Complexity of the nongeometric case prompted Handel and Mosher to ask (\cite{hm11} Question 6):

\begin{qst}{\label{Q:GJLL}} Which index types, satisfying $0 > i(\phi) \geq 1-r$, are achieved by nongeometric, fully irreducible $\phi \in Out(F_r)$?  \end{qst}

We answered the rank $3$ case with:

\begin{thm}{\label{T:IndexTheorem}} Each of the six possible index lists,  $(-\frac{1}{2}, -\frac{1}{2}, -\frac{1}{2})$, $(-\frac{1}{2}, -1)$, $(-\frac{3}{2})$, $(-\frac{1}{2}, -\frac{1}{2})$, $(-1)$, and $(-\frac{1}{2})$, whose index sums satisfy $0 > i(\phi) > 1-r$ are realized by a fully irreducible $\phi \in Out(F_3)$. \end{thm}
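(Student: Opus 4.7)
The plan is constructive: for each of the six index lists I would produce an explicit train track representative $g \colon \Gamma \to \Gamma$ of an outer automorphism $\phi \in Out(F_3)$ whose index list matches the target. Since each index entry $1 - k/2$ corresponds, via the identification used in \cite{hm11}, to a branch point of $T^-_\phi$ whose preimage in $\Gamma$ is a principal vertex with an eventually $Dg$-periodic direction set of cardinality $k$, the construction is driven by engineering the local Whitehead graphs at the vertices of $\Gamma$. For lists supported at a single singularity, such as $(-\tfrac{3}{2})$ or $(-1)$, I would work on the rose $R_3$ and specify a positive word on three generators whose associated affine train track map has a Perron--Frobenius transition matrix and realizes the desired valence at the unique vertex. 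For lists with several singularities, such as $(-\tfrac{1}{2},-\tfrac{1}{2},-\tfrac{1}{2})$, I would either pass to a multi-vertex graph such as the theta graph, or remain on the rose and arrange the $Dg$-orbits at the sole vertex so that they lift to distinct branch points of $T^-_\phi$.

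Once each candidate $g$ is written down, the verification breaks into three tasks: (i) checking that $g$ is a train track map, i.e.\ that no iterate of $g$ introduces backtracking, which reduces to a finite check on the turns in $g$-images of edges; (ii) reading off the index list from the $Dg$-periodic direction data at each principal vertex, per the definition recalled in Section~\ref{s:Dfs}; and (iii) showing that the represented outer automorphism is fully irreducible. Tasks (i) and (ii) are finite and mechanical once $g$ is specified.

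The principal obstacle is (iii). For this I would invoke the criterion, extracted from \cite{bh92} and later repackaged, that an expanding irreducible train track map with connected local Whitehead graph at each vertex and with no periodic indivisible Nielsen path represents a fully irreducible $\phi \in Out(F_r)$. Connectedness of the local Whitehead graphs is visible directly from the chosen word. The absence of periodic indivisible Nielsen paths is the delicate step and is where the bulk of the case-specific work lies; it can be attacked by using the Perron--Frobenius expansion factor to bound the length of any candidate Nielsen path and then enumerating combinatorially.

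Nongeometricity of the resulting fully irreducibles is automatic: geometric fully irreducibles satisfy the Poincar\'e--Hopf equality $i(\phi) = 1-r$, and the six index sums in question all lie strictly above $1-r$, so each realizer is forced to be nongeometric without further argument. The observation that the listed six exhaust all negative half-integer index lists summing into $(1-r,0)$ with summands in $\{-\tfrac{1}{2},-1,-\tfrac{3}{2},\dots\}$ is a short combinatorial enumeration, not part of the realization content of the statement.
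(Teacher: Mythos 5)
Your verification framework is exactly the paper's: exhibit an explicit pNp-free train track representative for each list, prove full irreducibility by the criterion combining irreducibility, a Perron--Frobenius transition matrix, connected local Whitehead graphs, and absence of periodic Nielsen paths (this is the FIC of \cite{p12c}, quoted in Section~\ref{s:Dfs}), then read the index list off the stable local Whitehead graphs; and nongeometricity is indeed automatic from $i(\phi)>1-r$ by \cite{gl95}. The genuine gap is that you never produce the six representatives, and for a realization theorem the constructions \emph{are} the proof. Your outline gives no reason why the engineering must succeed: one needs maps that are simultaneously train track, pNp-free, PF, with connected local Whitehead graphs, \emph{and} with the prescribed component structure of the stable graphs, and arranging all of this at once is where the entire difficulty lies. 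The paper's realizers are intricate --- multi-vertex graphs with image words dozens of letters long, checked partly by the computer program \cite{c12}, with the $(-\frac{3}{2})$ case outsourced to \cite{p12d} --- and the by-hand procedures of Section~\ref{S:ac} (bounded-length enumeration of candidate Nielsen paths, iteration of turns for the local Whitehead graphs) are exactly the checks you describe; describing them does not substitute for exhibiting objects that pass them.

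There is also a concrete error in your construction plan. For the multi-singularity lists you propose possibly to ``remain on the rose,'' but this is impossible for $(-\frac{1}{2},-1)$ and $(-\frac{1}{2},-\frac{1}{2},-\frac{1}{2})$: the entries $1-\frac{k}{2}$ require ideal Whitehead graph components with $3+4=7$, respectively $3+3+3=9$, vertices, each vertex being a periodic direction, while the rank-$3$ rose has only $6$ directions in total. Multi-vertex graphs are therefore forced, not optional; the paper uses a two-vertex, four-edge graph for $(-\frac{1}{2},-1)$ and a three-vertex, five-edge graph for $(-\frac{1}{2},-\frac{1}{2},-\frac{1}{2})$ (note also that the theta graph proper has rank $2$, so it cannot carry a representative of any $\phi\in Out(F_3)$). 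Finally, your statement of the irreducibility criterion replaces the Perron--Frobenius hypothesis by ``expanding irreducible''; since an irreducible, expanding transition matrix need not be primitive, you should state and verify the PF condition as the FIC actually requires --- in the paper this is done by checking that a high power of $g$ maps every edge over every other edge.
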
 

One may notice that we restrict to looking at outer automorphisms for which the right-hand inequality is strict. This is because we focus on ageometric outer automorphisms, as defined in \cite{gjll}. While ageometrics are believed generic, there does exists a second class of nongeometric outer automorphisms, namely the parageometrics (which could be classified as nongeometric fully irreducible outer automorphisms with geometric attracting tree). These have been studied in papers such as \cite{hm07} and \cite{g05}, where in fact they show that the inverse of a parageometric is ageometric. It can additionally be noted that Bestvina and Feighn give in \cite{bf94} a nice description of the distinction between ageometrics, parageometrics, and geometrics.

It is proved in \cite{gl95} that a fully irreducible has geometric attracting tree precisely if the index sum satisfies $i(\phi) = 1-r$. Thus, like geometrics, parageometrics have index sum $i(\phi) = 1-r$. It would be interesting to understand whether index lists satisfying $i(\phi) = 1-r$, but not realized by geometrics, are in fact realized by parageometric outer automorphisms.

While this paper is the first to focus on index list realization, the index theory for free group outer automorphisms has in some directions already been extensively developed. In fact, there are three types of $Out(F_r)$ index invariants in the literature, those of \cite{gl95}, \cite{gjll}, and \cite{ch10}. The index of $\phi$, as defined and studied in \cite{gjll}, is equal to the geometric index of $T^+_{\phi}$, as established by Gaboriau-Levitt \cite{gl95} for more general $R$-trees. \cite{ch12} provides a relationship between the index of \cite{ch10} and the geometric index, as well as uses the index to relate different properties of the attracting and repelling tree for a fully irreducible. There are also even index realization results of a different nature. For example, \cite{jl09} gives examples of automorphisms with the maximal number of fixed points on $\partial F_r$, as dictated by a related inequality in \cite{gjll}. Focusing on an $Out(F_r)$-version of the Masur-Smillie theorem, we restrict attention to fully irreducibles and the \cite{gjll} index inequality given above.

\subsection*{Acknowledgements}

\indent The author would like to thank her thesis advisor Lee Mosher for his continued discussions and support. She would like to thank Thierry Coulbois for his invaluable computer program and assistance. She would like to thank Ilya Kapovich for helpful discussions and advice. And, finally, she would like to thank Martin Lustig for his interest in her work.

The author was supported by the ARCHIMEDE Labex (ANR-11-LABX- 0033) and the A*MIDEX project (ANR-11-IDEX-0001-02) funded by the ``Investissements d’Avenir'' French government program managed by the ANR.

\section{Definitions and Background}{\label{s:Dfs}}

\vskip10pt

\noindent \textbf{Train track representatives.}

\vskip1pt

Let $R_r$ denote the $r$-petaled rose (graph with one vertex and $r$ edges) together with an identification $\pi(R_r) \cong F_r$. A connected $1$-dimensional CW-complex $\Gamma$ such that each vertex has valence greater than two, together with a homotopy equivalence (\emph{marking}) $R_r \to \Gamma$, is called a \emph{marked graph}. Each $\phi \in Out(F_r)$ is represented by a homotopy equivalence $g \colon \Gamma \to \Gamma$ of a marked graph, where $\phi=g_*$. When each vertex of $\Gamma$ has valence greater than two and $g$ additionally sends vertices to vertices and satisfies that $g^k$ is locally injective on edge interiors for each $k>0$, one says $g$ is a \emph{train track (tt) representative} for $\phi$. In \cite{bh92}, Bestvina and Handel prove that a fully irreducible outer automorphism always has a train track representative. Many of the definitions and notation for discussing train track representatives were established in \cite{bh92} and \cite{bfh00}. We remind the reader here of a few that are relevant.

Let $g \colon \Gamma \to \Gamma$ be a train track representative of some $\phi \in Out(F_r)$. For each $x\in \Gamma$, we let $\mathcal{D}(x)$ denote the set of \emph{directions} at $x$, i.e. germs of initial segments of edges emanating from $x$. For an edge $e \in \mathcal{E}(\Gamma)$, we let $D_0(e)$ denote the initial direction of $e$. For a path $\gamma=e_1 \dots e_k$, define $D_0 \gamma = D_0(e_1)$. We denote the map of directions induced by $g$ by \emph{$Dg$}, i.e. $Dg(d)=D_0(g(e))$ for $d=D_0(e)$. A direction $d$ is \emph{periodic} if $Dg^k(d)=d$ for some $k>0$.

We call an unordered pair of directions $\{d_i, d_j\}$ a \emph{turn}. It is an \emph{illegal turn} for $g$ if $Dg^k(d_i) = Dg^k(d_j)$ for some $k$ and \emph{legal} otherwise. Considering the directions of an illegal turn equivalent, one can define an equivalence relation on the set of directions at a vertex. Each equivalence class is called a \emph{gate}. For a path $\gamma=e_1e_2 \dots e_{k-1}e_k$ in $\Gamma$, we say $\gamma$ \emph{takes} $\{\overline{e_i}, e_{i+1}\}$ for each $1 \leq i < k$.

A train track representative is \emph{reducible} if it has an invariant subgraph with a noncontractible component and is otherwise called \emph{irreducible}. An outer automorphism $\phi$ is \emph{fully irreducible} if every representative of every power is irreducible.

\vskip10pt

\noindent \textbf{Local Whitehead graphs, ideal Whitehead graphs, and index lists.}

\vskip1pt

We assume in this subsection that $g\colon \Gamma \to \Gamma$ is a train track representive of $\phi \in Out(F_r)$. We also assume that $g$ has no \emph{periodic Nielsen path (pNp)}, i.e. a nontrivial path $\rho$ in $\Gamma$ such that, for some $k$, $g^k(\rho) \simeq \rho$ rel endpoints. As it is used in Section \ref{S:ac}, we remark that $\rho$ is called an \emph{indivisible periodic Nielsen path (ipNp)} if it cannot be written as a nontrivial concatenation $\rho=\rho_1 \cdot \rho_2$, where $\rho_1$ and $\rho_2$ are nontrivial pNps.

The following definitions are from \cite{hm11}. One can reference \cite{p12a} for more extensive explanations of the definitions and their invariance. It is notable that, while we use a train track representative here to define the ideal Whitehead graph and index list for a fully irreducible outer automorphism, both the ideal Whitehead graph and index list are invariants of the outer automorphism. In fact, they are invariants of the conjugacy class within $Out(F_r)$ of the outer automorphism.

Let $g\colon \Gamma \to \Gamma$ a train track representive of $\phi \in Out(F_r)$ and $v$ a vertex of $\Gamma$. The \emph{local Whitehead graph} $\mathcal{LW}(g; v)$ for $g$ at $v$ has a vertex for each direction at $v$ and an edge connecting the vertices for $d_i$ and $d_j$ if there exists an edge $e$ of $\Gamma$ and $k>0$ so that $g^k(e)$ takes the turn $\{d_i, d_j \}$.

\noindent Restricting to periodic directions, one obtains a subgraph called the \emph{local stable Whitehead graph} $\mathcal{SW}(g; v)$. Still assuming $g$ has no pNp's, the \emph{ideal Whitehead graph $\mathcal{IW}(\phi)$ of $\phi$} is then isomorphic to the disjoint union $\bigsqcup \mathcal{SW}(g;v)$ taken over all vertices with at least three periodic directions.

Let $\phi$ be a nongeometric fully irreducible outer automorphism and let $C_1, \dots, C_l$ be the connected components of the ideal Whitehead graph $\mathcal{IW}(\phi)$. For each $j$, let $k_j$ denote the number of vertices of $C_j$. The index list for $\phi$ can be defined as
$$(i_1, \dots, i_j, \dots, i_l) = (1-\frac{k_1}{2}, \dots, 1-\frac{k_j}{2}, \dots, 1-\frac{k_l}{2}).$$
The index sum is then $i(\phi) = \sum i_j$.

\vskip10pt

\noindent \textbf{Full irreducibility criterion.}

\vskip1pt

The \emph{transition matrix} for a train track representative $g$ is the square matrix such that, for each $i$ and $j$, the $ij^{th}$ entry is the number of times $g(E_j)$ crosses $E_i$ in either direction. A transition matrix $A=[a_{ij}]$ is \emph{Perron-Frobenius (PF)} if there exists an $N$ such that, for all $k \geq N$, $A^k$ is strictly positive (see for example \cite{bh92}).

In order to show that our maps represent fully irreducible outer automorphisms, we use the ``Full Irreducibility Criterion (FIC)'' proved in \cite{p12c} (Lemma 4.1):

\begin{oldlem}(\emph{The Full Irreducibility Criterion (FIC)})
Let $g\colon \Gamma \to \Gamma$ be a pNp-free, irreducible train track representative of $\phi \in Out(F_r)$. Suppose that the transition matrix for $g$ is Perron-Frobenius and that all the local Whitehead graphs are connected. Then $\phi$ is fully irreducible.
\end{oldlem}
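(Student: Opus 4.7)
\medskip

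\noindent\textbf{Proof proposal.} My plan is a contradiction argument: assume $\phi$ is not fully irreducible, and show that one of the three hypotheses must fail.

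First, I would reduce to the case where $\phi$ itself (rather than merely some power) fixes the conjugacy class of a proper free factor $A \leq F_r$. Replacing $g$ by $g^n$ is harmless for this purpose: it remains a pNp-free train track, its transition matrix $M^n$ is still Perron-Frobenius, and its local Whitehead graphs contain those of $g$ as subgraphs and so remain connected. So I may assume $\phi$ fixes $[A]$.

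Next, I would argue that preservation of $[A]$ by $\phi$ must manifest, through the train track $g$, as one of three combinatorial phenomena. The first possibility is that $g$ has a proper invariant subgraph with a noncontractible component, corresponding to a core subgraph of $\Gamma$ realizing $A$ after a Stallings fold sequence. Because the transition matrix of $g$ is Perron-Frobenius, for large $k$ every entry of $M^k$ is strictly positive, so $g^k$ carries each edge across every other edge; this directly rules out such a proper invariant subgraph. The second possibility is that the reduction is witnessed by a periodic Nielsen path along which $g$ collapses the graph onto a quotient carrying $A$; this is ruled out by the pNp-free hypothesis.

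The third possibility, and the combinatorial heart of the argument, is that preservation of $[A]$ manifests purely at a vertex $v$: the directions $\mathcal{D}(v)$ split as a disjoint union $\mathcal{D}_A \sqcup \mathcal{D}_B$, where $\mathcal{D}_A$ consists of those directions whose associated edges, after basepoint adjustment, have initial segments lying in the $A$-orbit, and $\mathcal{D}_B$ is the complement. Preservation of the free factor system would force every turn $\{Dg^k(d_i), Dg^k(d_j)\}$ crossed by any iterate $g^k(e)$ to stay within $\mathcal{D}_A$ or within $\mathcal{D}_B$; equivalently, every edge of the local Whitehead graph $\mathcal{LW}(g; v)$ would join two vertices on the same side of the partition. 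This exhibits $\mathcal{LW}(g; v)$ as disconnected, contradicting the final hypothesis.

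The main obstacle I anticipate is rigorously showing that the three obstructions above exhaust all ways in which $\phi$-invariance of $[A]$ can be visible on $g$. My plan would be to invoke the relative train track theory of \cite{bh92} and \cite{bfh00}: after possibly passing to a further power and folding, a reducible $\phi$ admits a representative with an invariant filtration, and the top exponentially-growing stratum of this representative can be compared, via Stallings folds, with the train track structure of $g$. The comparison should produce either an invariant subgraph, an ipNp, or a vertex direction partition of the kind above. Making this comparison canonical, equivariant with respect to the $F_r$-action on universal covers, and independent of the fold decomposition chosen is the technical crux, and is where a careful citation of the normal form results of \cite{bh92} and \cite{bfh00} would be required.
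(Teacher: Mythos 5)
First, a point of orientation: this paper does not prove the FIC at all --- it is quoted from \cite{p12c} (Lemma 4.1), where the actual proof lives --- so your proposal has to stand entirely on its own, and on its own terms it has a genuine gap. The heart of your argument is the assertion that invariance of $[A]$ must ``manifest'' through $g$ as one of exactly three phenomena: a proper invariant subgraph, a periodic Nielsen path, or a partition $\mathcal{D}(v)=\mathcal{D}_A \sqcup \mathcal{D}_B$ of the directions at some vertex that no iterated image of an edge ever crosses. But that exhaustiveness claim \emph{is} the lemma: the three hypotheses (PF transition matrix, pNp-free, connected local Whitehead graphs) are visibly tailored to kill exactly these three obstructions, so listing the obstructions and observing that the hypotheses rule them out is a restatement of the statement, not a proof of it. You concede this yourself by calling exhaustiveness ``the main obstacle'' and ``the technical crux,'' and the plan you offer --- compare a filtered representative of the reducible power with $g$ through Stallings folds and conclude that one of the three obstructions must appear --- is a hope rather than an argument: neither \cite{bh92} nor \cite{bfh00} hands you such a comparison, and showing that a reduction which is ``spread out'' over $\Gamma$ (rather than localized in a subgraph or at a single vertex) still forces a pNp or a disconnected local Whitehead graph is precisely the work the proof in \cite{p12c} has to do.

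Two of your supporting claims are also incorrect as stated. (1) The local Whitehead graphs of $g^n$ do not contain those of $g$; the containment goes the other way, since the turns taken by iterates of $g^n$ form a subset of the turns taken by iterates of $g$. So connectedness does not automatically pass to powers. It does pass here, but only via the PF hypothesis: if $g^k(e)$ takes a turn, then for every edge $e'$ and all large $j$ the path $g^j(e')=g^k(g^{j-k}(e'))$ contains $g^k(e)^{\pm 1}$ as a subpath (legality prevents cancellation), hence takes the same turn; choosing $j$ a multiple of $n$ gives $\mathcal{LW}(g^n;v)=\mathcal{LW}(g;v)$. (2) The partition $\mathcal{D}_A \sqcup \mathcal{D}_B$ is not well defined: directions at a vertex of $\Gamma$ carry no natural labeling by $A$-orbits, because an invariant free factor is only a conjugacy class of subgroups, visible after lifting to the universal cover or passing to a Stallings graph, and it need not be realized near any particular vertex of $\Gamma$. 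So your third ``phenomenon'' is not actually a condition on $g$ that one could verify or contradict. Both repairs require the same missing ingredient as the main gap: a genuine theorem translating reducibility of a power of $\phi$ into combinatorial data on a fixed irreducible train track representative, which is exactly what \cite{p12c} supplies and your outline defers.
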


\section{Verification Algorithms}{\label{S:ac}}

In Theorem \ref{T:IndexTheorem}, we used a computer program \cite{c12} to verify that each example is indeed a train track representative of the correct rank and additionally has no pNp's.  We include here a procedure for finding by hand all pNp's of a train track map. This procedure is not too different from that given in \cite{p12c} and is that applied in \cite{hm11} Example 3.4.

We also include here procedures for computing by hand local Whitehead graphs and ideal Whitehead graphs. In the proof of Theorem \ref{T:IndexTheorem} we only use the procedures for pNp-free train track maps.

We leave verification of the validity of all of the procedures to the reader.

\vskip10pt

\noindent \textbf{Finding periodic Nielsen paths.}

\vskip1pt

Let $g\colon \Gamma \to \Gamma$ be a train track map and $\{T_1, \dots, T_n \}$ the set of illegal turns for $g$. The following procedure will identify if there exists an ipNp $\rho = \overline{\rho_1} \rho_2$ for $g$, where $\rho_1 = e_1\dots e_m$ and $\rho_2 = e_1'\dots e_{m'}'$ are edge paths (with possibly $e_m$ and $e_{m'}'$ being partial edges) and with illegal turn $T_i = \{D_0(e_1), D_0(e_1')\}$ $=\{d_1, d_1'\}$. Since pNps can be decomposed into ipNp, as such, one can find all pNps for $g$. We let $\rho_{1,k}=e_1\dots e_k$ and $\rho_{2,l}= e_1'\dots e_l'$ throughout the procedure. 

Suppose $Dg^{j-1}(e_1') \neq Dg^{j-1}(e_1)$ but $Dg^j(e_1') = Dg^j(e_1)$. One of the following holds:

\begin{itemize}
\item[\textbf{(A)}] Either $g^j(e_1)$ is the initial subpath of $g^j(e_1')$ or vice versa.
\item[\textbf{(B)}] $g^j(e_1)=\gamma \alpha_1$ and $g^j(e_1')=\gamma \alpha_2$ where $\{D_0(\alpha_1), D_0(\alpha_2)\}$ is a legal turn.
\item[\textbf{(C)}] $g^j(e_1)=\gamma \alpha_1$ and $g^j(e_1')=\gamma \alpha_2$ where $\{D_0(\alpha_1), D_0(\alpha_2)\}$ is an illegal turn. And either \newline
(i) $\{D_0(\alpha_1), D_0(\alpha_2)\} = T_i$ \newline
(ii) or $\{D_0(\alpha_1), D_0(\alpha_2)\} \neq T_i$.
\end{itemize}

In the case of (A), one must proceed to ``edge addition'' (see below) with $e_1 = \rho_{1,1}$ and $e_1' = \rho_{2,1}$. In the case of (B), there is no ipNp containing $T_i$. In the case of (C)(i), there exists an ipNp from a fixed point of $e_1$ to a fixed point in $e_1'$. In the case of (C)(ii), to reach an outcome, one must continue composing with $g$ until landing in the case of (A$'$), (B$'$), below or (C$'$)(i) or reaching a rotationless power:

\begin{itemize}
\item[\textbf{(A$'$)}] Either $g^j(\rho_{1,k})$ is the initial subpath of $g^j(\rho_{2,l})$ or vice versa.
\item[\textbf{(B$'$)}] $g^j(\rho_{1,k})=\gamma \alpha_1'$ and $g^j(\rho_{2,l})=\gamma \alpha_2'$ where $\{D_0(\alpha_1), D_0(\alpha_2)\}$ is a legal turn.
\item[\textbf{(C$'$)}] $g^j(\rho_{1,k})=\gamma \alpha_1'$ and $g^j(\rho_{2,l})=\gamma \alpha_2'$ where $\{D_0(\alpha_1), D_0(\alpha_2)\}$ is an illegal turn. And either \newline
(i) $\rho_{1,k} \subset \alpha_1'$ and $\rho_{2,l} \subset \alpha_2'$ \newline
(ii) or (i) does not hold.
\end{itemize}

In the case of (A$'$), proceed to ``edge addition.'' In the case of (B$'$), $\rho_{1,k}$ and $\rho_{2,l}$ could not yield an ipNp containing $T_i$. In the case of (C$'$)(i), there exists an ipNp from a fixed point of $e_k$ to a fixed point in $e_l'$. In the case of (C$'$)(ii), continue composing with $g$ until either one lands in the case of (A$'$), (B$'$), or (C$'$)(i) or one reaches a rotationless power, in which case (C$'$)(ii) would indicate there is no iNp.

\vspace{5pt}

\noindent \emph{Edge addition:}

Without generality loss (or by adjusting notation) we can assume $g^j(\rho_{1,k})$ is the initial subpath of $g^j(\rho_{2,l})$, so $g^j(\rho_{2,l}) = g^j(\rho_{1,k})\sigma$, for some legal path $\sigma$. Then $\rho_1$ would need to contain another edge $e_{k+1}$. With each choice of $e_{k+1}$ such that $\rho_{1,k}e_{k+1}$ is legal and $Dg^j(e_{k+1}) = \sigma$, one must continue to compose with $g$ until following the above procedure either leads to a pNp or shows the choice does not lead to a pNp.

\begin{rk} One can note that this procedure is finite, as there are only finitely many ipNp's and a bound on the length of an iNp (as described in \cite{bh92} Corollary 3.5 to be a consequence of the ``bounded cancellation lemma''). 
\end{rk}

\vskip10pt

\noindent \textbf{Computing index lists.}

\vskip1pt

Suppose $g\colon \Gamma \to \Gamma$ is a train track representative for an ageometric fully irreducible. And supposed $\Gamma$ has periodic vertices $v_1, \dots, v_k$. For each $1 \leq i \leq k$, let $n_i$ denote the number of gates at the vertex $v_i$. Define an equivalence relation on the set of all periodic points by: $x_i \sim x_j$ if there exists a pNp running from $x_i$ to $x_j$. Call an equivalence class a \emph{Nielsen class}. For a Nielsen class $N_i = \{x_1, \dots, x_n \}$, let $g_i$ denote the number of gates at $x_i$. Now let 
\begin{center}
$n_i = (\sum g_i) - \# \{\text{iNPs}$ $\rho$ $\text{such that both endpoints of}$ $\rho$ $\text{are in} N_i \}$.
\end{center}
The index list is then 
$$\{1-\frac{n_1}{2}, \dots, 1-\frac{n_t}{2} \},$$
where we only include nonzero entries. 

Notice that there are only finitely many nonzero entries, as there are only  finitely many iNPs and a periodic point $x_i$ that is not a vertex and not the endpoint of an iNP will have $1-\frac{x_i}{2}=0$. Additionally notice that one does not need to find all periodic points to make this computation, but only needs to consider Nielsen classes that contain a vertex or endpoint of a pNp.

\vskip10pt

\noindent \textbf{Computation of local Whitehead graphs.}

\vskip1pt

Now let $g\colon \Gamma \to \Gamma$ be any train track map. Recall that the local Whitehead graph $\mathcal{LW}(g; v)$ has a vertex for each direction at $v$ and an edge connecting the vertices for $d_i$ and $d_j$ if there is some edge $e$ of $\Gamma$ and some $k>0$ so that $g^k(e)$ traverses the turn $\{d_i, d_j \}$. We explain a finite procedure for computing all such $\{d_i, d_j \}$. We denote by $T$ this list of turns traversed by some $g^k(e)$. 

Enumerate the edges $e_i$ of $\Gamma$. For each $e_i$, find the list of turns traversed by $g(e_i)$.  Let 
$$\mathcal{T} = \{\{d_{i_1}, d_{j_1} \}, \dots, \{d_{i_m}, d_{j_m} \} \}$$ 
be the union of these lists. That is, each $\{d_{i_b}, d_{j_b} \} \in \mathcal{T}$ is a turn taken by some $g(e_i)$. We now construct the list of turns $T$ as follows: $T$ first off includes all elements of $\mathcal{T}$, but it will also include all $Dg^k(\{d_{i_b}, d_{j_b} \})$ where $k>0$. To ensure this algorithmically: 

Start with $\{d_{i_1}, d_{j_1} \}$. Add $Dg(\{d_{i_1}, d_{j_1} \})$, $Dg^2(\{d_{i_1}, d_{j_1} \})$, etc, to $\mathcal{D}$ until reaching some $Dg^N(\{d_{i_1}, d_{j_1} \})$ already in $\mathcal{D}$. Now do the same for $\{d_{i_2}, d_{j_2} \}$, for $\{d_{i_3}, d_{j_3} \}$, etc. 

Notice that, not only is this set $T$ finite, but it will contain fewer than $mR$ elements, where $R$ is the minimum rotationless power (see \cite{fh11}). So the procedure is finite.

\begin{ex}{\label{e:lwg}} We consider the train track map on the rose:
~\\
\vspace{-3mm}
$$
g =
\begin{cases} a \mapsto cab \\
b \mapsto ca \\
c \mapsto acab
\end{cases}
$$

Since the automorphism is positive, it is easily verified to be train track.

\noindent The direction map $Dg$ sends:

$a \mapsto c \mapsto a \mapsto \dots$

$b \mapsto c \mapsto \dots$

$c \mapsto a \mapsto \dots$

$\bar{a} \mapsto \bar{b} \mapsto \bar{a} \mapsto \dots$

$\bar{b} \mapsto \bar{a} \mapsto \dots$

$\bar{c} \mapsto \bar{b} \mapsto \dots$

\noindent The periodic directions are $a$,$c$,$\bar{a}$, and $\bar{b}$ and the gates are $\{a,b\}$, $\{c\}$, $\{\bar{a},\bar{c}\}$, and $\{\bar{b}\}$. Since there are four gates at the single vertex, the index list has a single entry $1-\frac{4}{2}=-1$.

The turns taken by $g(a)$ are $\{\bar{c},a\}$ and $\{\bar{a},b\}$. The turns taken by $g(b)$ are $\{\bar{c},a\}$.The turns taken by $g(c)$ are $\{\bar{a},c\}$, $\{\bar{c},a\}$, and $\{\bar{a},b\}$. Thus,
~\\
\vspace{-5mm}
$$\mathcal{T} = \{\{\bar{c},a\}, \{\bar{a},b\}, \{\bar{a},c\} \}.$$

\noindent $\{\bar{c},a\}$ maps to $\{\bar{b},c\}$ maps to $\{\bar{a},a\}$ maps to $\{\bar{b},c\}$, which is already in $T$. $\{\bar{a},b\}$ maps to $\{\bar{b},c\}$, which is already in $T$. $\{\bar{a},a\}$ maps to $\{\bar{b},a\}$ maps to $\{\bar{a},c\}$, which is already in $T$. So
~\\
\vspace{-5mm}
$$T = \{\{\bar{c},a\}, \{\bar{a},b\}, \{\bar{a},c\}, \{\bar{b},c\}, \{\bar{a},a\}, \{\bar{b},a\} \}.$$

The two illegal turns are $\{a,b\}$ and $\{\bar{a},\bar{c}\}$. We verify that there is no ipNp containing $\{a,b\}$ and leave the verification for $\{\bar{a},\bar{c}\}$ to the reader.

$a \mapsto cab$ and

$b \mapsto ca$.

\noindent Thus we are in the case of (A) with $e_1=b$, and there must be another edge $e_2$ after $b$. $e_2$ must satisfy that either $Dg(e_2)=a$ or $Dg(e_2)=b$. The only such possibility is $e_2=c$. So $\rho_{1,2}=bc$. We apply $g$ twice because after the first application, cancellation ends in the illegal turn $\{\bar{a},\bar{c}\}$, but not in the manner of (C')(i):

$a \mapsto cab \mapsto acabcabca$ and

$bc \mapsto caacab \mapsto acabcabcabacabcabca$.

\noindent We are now in the case of (A') and must add another edge $e_2'$ after $e_1'=a$. $e_2'$ must satisfy that either $Dg^2(e_2)=a$ or $Dg^2(e_2)=b$. So we must check $e_2'=a$ and $e_2'=b$. The cancellation of $g^3(aa)$ and $g^3(bc)$ ends with $\{\bar{b},\bar{c}\}$, which is a legal turn. And the same is true for the cancellation of $g^3(ab)$ and $g^3(bc)$. So we are done.

\end{ex}

\section{Main Theorem}

\begin{thm}{\label{T:IndexTheorem}} Each of the six possible index lists,  $(-\frac{1}{2}, -\frac{1}{2}, -\frac{1}{2})$, $(-\frac{1}{2}, -1)$, $(-\frac{3}{2})$, $(-\frac{1}{2}, -\frac{1}{2})$, $(-1)$, and $(-\frac{1}{2})$, satisfying $0 > i(\phi) > 1-r$ are realized by fully irreducible $\phi \in Out(F_3)$. \end{thm}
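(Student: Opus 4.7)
The plan is entirely constructive: for each of the six index lists I will exhibit an explicit train-track representative $g \colon \Gamma \to \Gamma$ of a fully irreducible $\phi \in Out(F_3)$ realizing that list. Once the six examples are written down, the remainder of the argument is mechanical verification that each $g$ really does have the advertised properties, using the algorithms of Section \ref{S:ac} together with the Full Irreducibility Criterion.

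First I choose, for each list, a sensible shape for $\Gamma$. Because $\mathcal{IW}(\phi) \cong \bigsqcup_v \mathcal{SW}(g;v)$ summed over vertices $v$ with at least three periodic directions, a list of length $\ell$ with entries $1 - k_j/2$ requires $\Gamma$ to have at least $\ell$ such vertices, the $j^{\text{th}}$ of which supports exactly $k_j$ periodic directions. For the single-component lists $(-\tfrac{3}{2})$, $(-1)$, and $(-\tfrac{1}{2})$ I search for representatives on the rose $R_3$ (which already appears in Example \ref{e:lwg}); for $(-\tfrac{1}{2},-1)$, $(-\tfrac{1}{2},-\tfrac{1}{2})$, and $(-\tfrac{1}{2},-\tfrac{1}{2},-\tfrac{1}{2})$ I move to a rank-$3$ graph with two or three vertices of valence at least three, such as a dumbbell or a subdivision-type graph with three trivalent vertices. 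In each case the edge-path images of $g$ will be chosen to be positive, which makes the train-track property automatic.

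For each candidate $g$ the verification runs through four steps. I compute the transition matrix and exhibit a positive power to certify that it is Perron-Frobenius. I then apply the local Whitehead graph algorithm of Section \ref{S:ac} to list all turns taken by some $g^k(e)$ and confirm that each $\mathcal{LW}(g;v)$ is connected. Next, starting from each illegal turn, I execute the pNp-finding algorithm, pushing through the cases (A), (B), (C), (A$'$), (B$'$), (C$'$) until, before the rotationless power is reached, every branch terminates in case (B) or (B$'$), which rules out all ipNps and hence all pNps. At this point the Full Irreducibility Criterion yields that $\phi = g_*$ is fully irreducible, and I read $\mathcal{IW}(\phi)$ off of the subgraphs $\mathcal{SW}(g;v) \subseteq \mathcal{LW}(g;v)$ spanned by the periodic directions; the component sizes give the required index list, and the strict inequality $i(\phi) > 1-r$ then holds automatically since each target sum exceeds $-2$.

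The main obstacle is the design step: producing, for each list, a train-track map whose stable Whitehead graphs have precisely the prescribed components while simultaneously being Perron-Frobenius and pNp-free. The multi-component lists, particularly $(-\tfrac{1}{2},-\tfrac{1}{2},-\tfrac{1}{2})$, are the most delicate, because splitting $\mathcal{IW}(\phi)$ into several pieces forces $\Gamma$ to carry several vertices each supporting its own local stable Whitehead component, and one must additionally ensure that no ipNp arises between distinct Nielsen classes (which would otherwise merge classes and shift the index count). In practice a first attempt will often either be reducible, fail to have all local Whitehead graphs connected, or acquire an ipNp; the construction then has to be iterated by small local perturbations of the defining edge-path words until the four hypotheses of the FIC are simultaneously satisfied and the Whitehead data takes the desired form.
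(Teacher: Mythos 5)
Your proposal follows exactly the paper's strategy --- exhibit an explicit pNp-free train track representative for each list, certify Perron--Frobenius and connected local Whitehead graphs, invoke the FIC, and read the index off the stable Whitehead graphs --- but it stops short of the actual mathematical content: the six examples are never produced. Theorem \ref{T:IndexTheorem} is a pure existence (realization) statement, so its proof \emph{is} the list of witnesses together with their verification; a description of how one would search for witnesses, ending with the admission that the design step must be ``iterated by small local perturbations\dots until the four hypotheses of the FIC are simultaneously satisfied,'' is a research plan, not a proof. Nothing in your argument guarantees that this search terminates successfully for any of the six lists, and that is precisely what the theorem asserts. This is the gap: the verification machinery (Section \ref{S:ac}, the FIC) is correctly assembled, but with no maps to feed into it, nothing is proved. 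The paper's proof, by contrast, consists almost entirely of the explicit edge-path images (e.g.\ $a \mapsto adbcdbdbda$, $b \mapsto bdadb$, \dots for the list $(-\frac{1}{2},-1)$), with the computer program \cite{c12} and the hand algorithms supplying the checks.

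Two of your a priori design constraints are also suspect and could doom the search even in principle. First, you insist all examples be positive; but the paper's realization of $(-\frac{1}{2})$ is emphatically not a positive map (its images involve inverse edges), and positivity is a genuine restriction: for a positive map every turn taken by an image path has the form $\{\bar{x},y\}$ with $x,y$ positive, so every local (hence stable) Whitehead graph is bipartite between incoming and outgoing directions, which constrains which ideal Whitehead graphs --- and conceivably which lists --- you can reach. Second, your claim that a list of length $\ell$ ``requires $\Gamma$ to have at least $\ell$ vertices'' with at least three periodic directions is false: $\mathcal{IW}(\phi)$ is the disjoint union of the $\mathcal{SW}(g;v)$, and a single vertex's stable Whitehead graph may itself be disconnected (the FIC only demands that the \emph{local} Whitehead graphs be connected), so one vertex can contribute several components. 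Neither error is fatal to the overall scheme, but both illustrate that the heuristics you would use to steer the missing construction are unreliable, reinforcing that the construction itself cannot be waved off as routine.
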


\begin{proof} For each index list we give an explicit example. We used a computer program \cite{c12} to verify that each example is indeed a train track representative of the correct rank and additionally has no pNp's. We apply the FIC to show that the example is indeed a fully irreducible outer automorphism. To verify that a given representative has PF transition matrix, since our representatives are train track maps, it suffices to prove that a sufficiently high power maps each edge over each other edge. We compute the local Whitehead graphs to show that they are connected. Having no pNp's, having PF transition matrix, and having connected local Whitehead graphs, our representatives are fully irreducible by the FIC. Since there are no pNp's, restricting to the periodic directions gives the components of the ideal Whitehead graph, from which we computed the index.

\vskip10pt

\noindent INDEX LIST $(-\frac{3}{2})$:

\vskip5pt

A plethora of examples with this index list can be found in \cite{p12d}.

\vskip10pt

\noindent INDEX LIST $(-\frac{1}{2}, -1)$:

The representative on the graph
~\\
\vspace{-9mm}
\begin{figure}[H]
\centering
\noindent \includegraphics[width=1.5in]{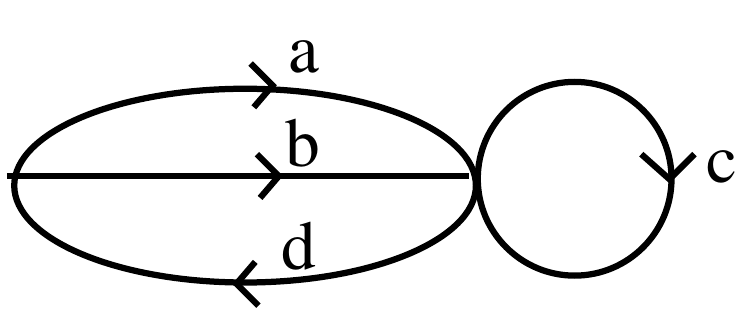}
\end{figure}

is:
~\\
\vspace{-3mm}
$$
g =
\begin{cases} a \mapsto adbcdbdbda \\
b \mapsto bdadb \\
c \mapsto cdbdbdadbdbdadbdbdbdadbdbdadbcdbdbdadbdbdadb \\
d \mapsto dbd
\end{cases}
$$

As you can see from the below figure, the local Whitehead graphs are connected.

\begin{figure}[H]
\centering
\noindent \includegraphics[width=1.7in]{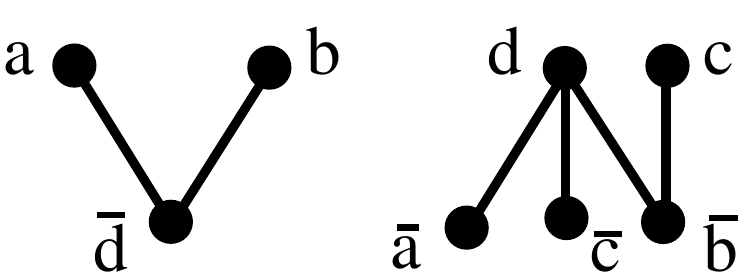}
\end{figure}

Restricting to periodic directions, since there are no periodic Nielsen paths, this gives the ideal Whitehead graph, from which the index list is computed to be $(-\frac{1}{2}, -1)$:

\begin{figure}[H]
\centering
\noindent \includegraphics[width=1.7in]{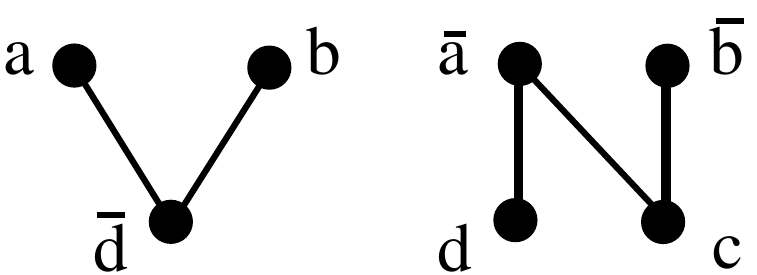}
\end{figure}

\vskip10pt

\noindent INDEX LIST $(-\frac{1}{2}, -\frac{1}{2}, -\frac{1}{2})$:

The representative on the graph
~\\
\vspace{-9mm}
\begin{figure}[H]
\centering
\noindent \includegraphics[width=1in]{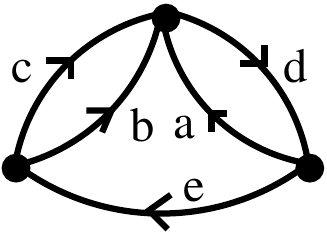}
\end{figure}

is:
~\\
\vspace{-3mm}
$$
g =
\begin{cases}
a \mapsto adecdadebda \\
b \mapsto bdadadecdadebdadadebdadebdadadecdadebdadadeb \\
c \mapsto cdadebdadadebdadebdadadebdadadecdadebdadadebdadebdadadecdadebdadadebdadebdadadecdadebda \\
d \mapsto dadebdad \\
e \mapsto ebdadade
\end{cases}
$$

As you can see from the below figure, the local Whitehead graphs are connected.

\begin{figure}[H]
\centering
\noindent \includegraphics[width=2.5in]{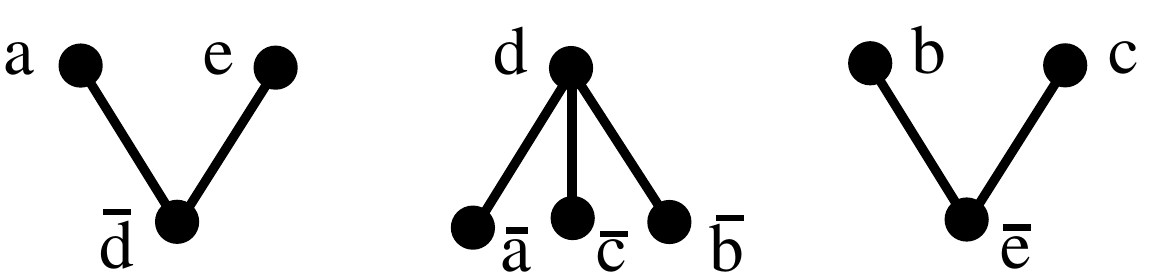}
\end{figure}

Restricting to periodic directions, since there are no periodic Nielsen paths, this gives the ideal Whitehead graph, from which the index list is computed to be $(-\frac{1}{2}, -\frac{1}{2}, -\frac{1}{2})$:

\begin{figure}[H]
\centering
\noindent \includegraphics[width=2.5in]{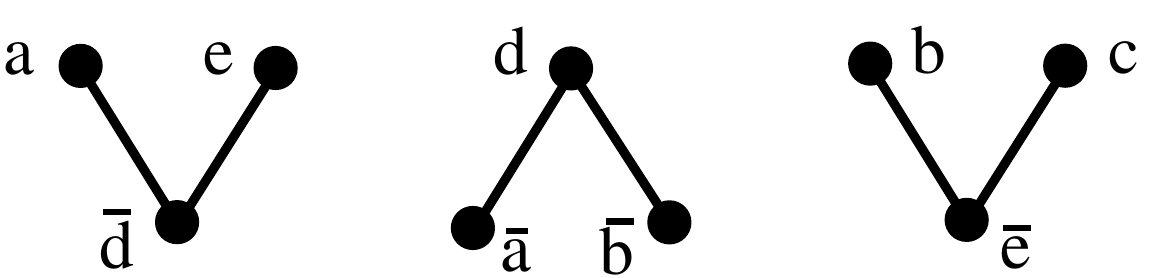}
\end{figure}

\vskip10pt

\noindent INDEX LIST $(-1)$: The representative on the rose is:
~\\
\vspace{-3mm}
$$
g =
\begin{cases} a \mapsto cab \\
b \mapsto ca \\
c \mapsto acab
\end{cases}
$$

As you can see from the below figure, the single local Whitehead graph is connected.

\begin{figure}[H]
\centering
\noindent \includegraphics[width=.85in]{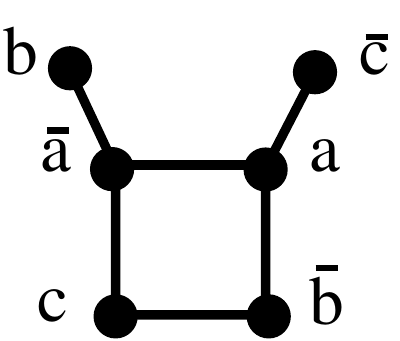}
\end{figure}

Restricting to periodic directions, since there are no periodic Nielsen paths, this gives the ideal Whitehead graph, from which the index list is computed to be $(-1)$:

\begin{figure}[H]
\centering
\noindent \includegraphics[width=.75in]{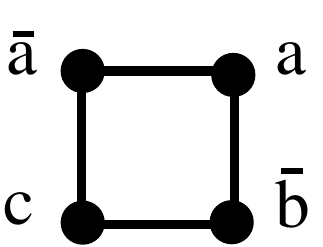}
\end{figure}

\vskip10pt

\noindent INDEX LIST $(-\frac{1}{2}, -\frac{1}{2})$:

The representative on the graph
~\\
\vspace{-9mm}
\begin{figure}[H]
\centering
\noindent \includegraphics[width=1in]{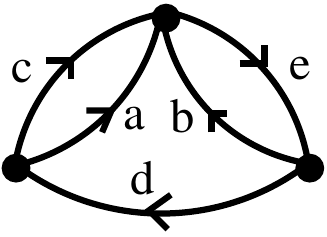}
\end{figure}

is:
~\\
\vspace{-3mm}
$$
g =
\begin{cases}
a \mapsto aebedcebedcebebedcebebeda \\
b \mapsto beda \\
c \mapsto cebebeda \\
d \mapsto dcebebed \\
e \mapsto ebedcebe
\end{cases}
$$

As you can see from the below figure, the local Whitehead graphs are connected.

\begin{figure}[H]
\centering
\noindent \includegraphics[width=2.5in]{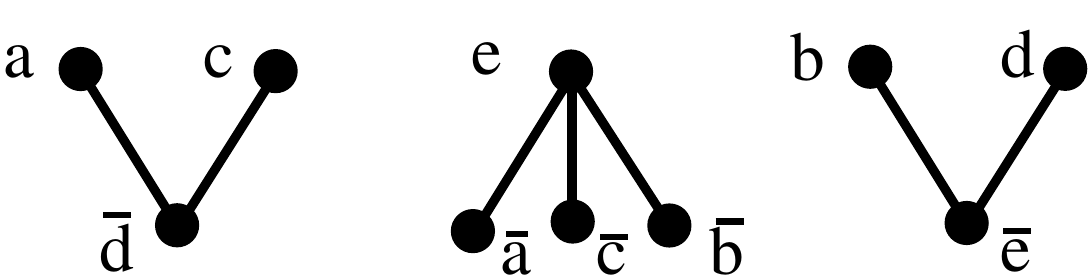}
\end{figure}

Restricting to periodic directions, since there are no periodic Nielsen paths, this gives the ideal Whitehead graph, from which the index list is computed to be $(-\frac{1}{2}, -\frac{1}{2})$:

\begin{figure}[H]
\centering
\noindent \includegraphics[width=1.25in]{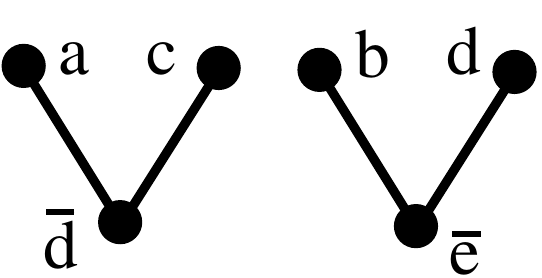}
\end{figure}

\vskip10pt

\noindent INDEX LIST $(-\frac{1}{2})$:

The representative on the graph
~\\
\vspace{-9mm}
\begin{figure}[H]
\centering
\noindent \includegraphics[width=1in]{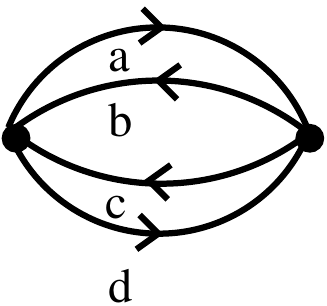}
\end{figure}

is:
~\\
\vspace{-3mm}
$$
g =
\begin{cases}
a \mapsto \bar{d}\bar{b}\bar{d}\bar{c}\bar{d}\bar{b}\bar{a}\bar{b}\bar{d}\bar{c}\bar{d}\bar{b}\bar{a}
\bar{b}\bar{d}\bar{c}\bar{d}\bar{b}\bar{d}\bar{c}\bar{d}\bar{b}
\bar{d}\bar{b}\bar{d}\bar{c}\bar{d}\bar{b}\bar{d}\bar{b}\bar{d}\bar{c}\bar{d}\bar{b}\bar{a}
\bar{b}\bar{d}\bar{c}\bar{d}\bar{b}\bar{a}\bar{b}\bar{d}\bar{c}
\bar{d}\bar{b}\bar{d}\bar{c}\bar{d}\bar{b}\bar{d}\bar{b}
\bar{d}\bar{c}\bar{d}\bar{b}\bar{d}\bar{b}\bar{d}\bar{c}\bar{d}\bar{b}\bar{a}\bar{b}\bar{d}\bar{c} \\
b \mapsto \bar{d}\bar{b}\bar{d}\bar{c}\bar{d}\bar{b}\bar{d}\bar{b}\bar{d}\bar{c}\bar{d}\bar{b}\bar{a}
\bar{b}\bar{d}\bar{c}\bar{d}\bar{b}\bar{d}
\bar{c}\bar{d}\bar{b}\bar{d}\bar{b}\bar{d}\bar{c}\bar{d}\bar{b} \\
c \mapsto cdbabdcdbdbdcdbdbdcdbdcdbabdcdbabdcdbdbdcdbdbdcdbdcdbabdcdbabdcdbd\\
\bar{a}\bar{b}\bar{d}\bar{c}\bar{d}\bar{b}\bar{a}\bar{b}\bar{d}
\bar{c}\bar{d}\bar{b}\bar{d}\bar{c}\bar{d}\bar{b}
cdbabdcdbdbdcdbdbdcdbdcdbabdcdbabdcdbdbdcdbdbdcdbdcdbabd
cdbabdcdbd\\
\bar{a}\bar{b}\bar{d}\bar{c}\bar{d}\bar{b}\bar{a}\bar{b}\bar{d}\bar{c}\bar{d}\bar{b}
\bar{d}\bar{c}\bar{d}\bar{b}\bar{d}\bar{b}\bar{d}\bar{c}\bar{d}\bar{b}\bar{d}\bar{b}\bar{d}\bar{c}
\bar{d}\bar{b}\bar{a}\bar{b}\bar{d}\bar{c}\bar{d}\bar{b}\bar{a}
\bar{b}\bar{d}\bar{c}\bar{d}\bar{b}\bar{d}\bar{c}\bar{d}\bar{b}\bar{d}\bar{b}
\bar{d}\bar{c}\bar{d}\bar{b}\bar{d}\bar{b}\bar{d}\bar{c}\bar{d}\bar{b}\bar{a}\bar{b}
\bar{d}\bar{c}\bar{d}\bar{b}\bar{d}\bar{c}\bar{d}\bar{b}\bar{d}\bar{b}\bar{d}\bar{c}\bar{d}\bar{b}
\\
d \mapsto \bar{a}\bar{b}\bar{d}\bar{c}\bar{d}\bar{b}\bar{a}\bar{b}\bar{d}\bar{c}\bar{d}\bar{b}
\bar{d}\bar{c}\bar{d}\bar{b}\bar{d}\bar{b}\bar{d}\bar{c}\bar{d}\bar{b}
\bar{d}\bar{b}\bar{d}\bar{c}\bar{d}\bar{b}\bar{a}
\bar{b}\bar{d}\bar{c}\bar{d}\bar{b}\bar{a}\bar{b}\bar{d}\bar{c}
\bar{d}\bar{b}\bar{d}\bar{c}\bar{d}\bar{b}
\end{cases}
$$

As you can see from the below figure, the local Whitehead graphs are connected.

\begin{figure}[H]
\centering
\noindent \includegraphics[width=1.75in]{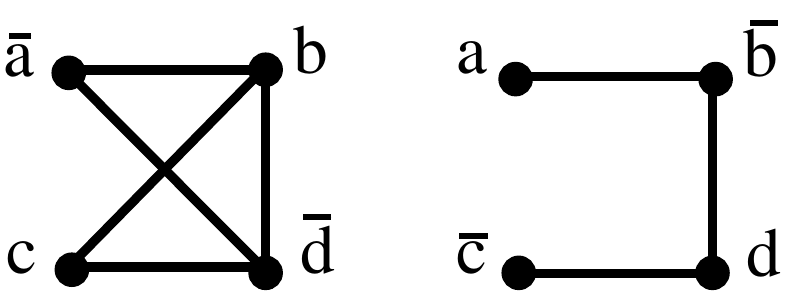}
\end{figure}

Restricting to periodic directions, since there are no periodic Nielsen paths, this gives the ideal Whitehead graph, from which the index list is computed to be $(-\frac{1}{2})$:

\begin{figure}[H]
\centering
\noindent \includegraphics[width=.75in]{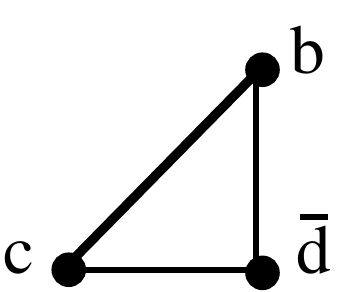}
\end{figure}

\vskip10pt

\end{proof}

\bibliographystyle{amsalpha}
\bibliography{PaperReferences}

\providecommand{\bysame}{\leavevmode\hbox to3em{\hrulefill}\thinspace}
\providecommand{\MR}{\relax\ifhmode\unskip\space\fi MR }
\providecommand{\MRhref}[2]{%
  \href{http://www.ams.org/mathscinet-getitem?mr=#1}{#2}
}
\providecommand{\href}[2]{#2}
\begin{thebibliography}{FdPDdm91}

\bibitem[BF94]{bf94}
M.~Bestvina and M.~Feighn, \emph{Outer limits}, preprint (1994), 1--19.

\bibitem[BFH97]{bfh97}
M.~Bestvina, M.~Feighn, and M.~Handel, \emph{Laminations, trees, and
  irreducible automorphisms of free groups}, Geometric and Functional Analysis
  \textbf{7} (1997), no.~2, 215--244.

\bibitem[BFH00]{bfh00}
\bysame, \emph{The {T}its {A}lternative for {O}ut $({F}_n)$ {I}: {D}ynamics of
  exponentially-growing automorphisms}, Annals of Mathematics-Second Series
  \textbf{151} (2000), no.~2, 517--624.

\bibitem[BH92]{bh92}
M.~Bestvina and M.~Handel, \emph{Train tracks and automorphisms of free
  groups}, The Annals of Mathematics \textbf{135} (1992), no.~1, 1--51.

\bibitem[CH]{ch10}
T.~Coulbois and A.~Hilion, \emph{Rips induction: {I}ndex of the dual lamination
  of an {R}-tree. 2010}, arXiv preprint arXiv:1002.0972.

\bibitem[CH12]{ch12}
\bysame, \emph{Botany of irreducible automorphisms of free groups}, Pacific
  Journal of Mathematics \textbf{256} (2012), no.~2.

\bibitem[Coo87]{c87}
Daryl Cooper, \emph{Automorphisms of free groups have finitely generated fixed
  point sets}, Journal of Algebra \textbf{111} (1987), no.~2, 453--456.

\bibitem[Cou14]{c12}
T.~Coulbois, \emph{{F}ree group automorphisms and train-track representative in
  python/sage}, \url{https://github.com/coulbois/sage-train-track}, 2012--2014.

\bibitem[CV86]{cv86}
M.~Culler and K.~Vogtmann, \emph{Moduli of graphs and automorphisms of free
  groups}, Inventiones mathematicae \textbf{84} (1986), no.~1, 91--119.

\bibitem[DPS75]{ds75}
Joan~L Dyer and G~Peter~Scott, \emph{Periodic automorphisms of free groups},
  Communications in Algebra \textbf{3} (1975), no.~3, 195--201.

\bibitem[EMR12]{emr12}
A.~Eskin, M.~Mirzakhani, and K.~Rafi, \emph{Counting closed geodesics in
  strata}, arXiv preprint arXiv:1206.5574 (2012).

\bibitem[FdPDdm91]{flp79}
A.~Fathi and Universit{\'e} de~Paris 11. D{\'e}partement~de math{\'e}matique,
  \emph{Travaux de {T}hurston sur les surfaces: S{\'e}minaire {O}rsay
  [1991-1979]}, Soci{\'e}t{\'e} math{\'e}matique de France, 1991.

\bibitem[FH11]{fh11}
Mark Feighn and Michael Handel, \emph{The {R}ecognition {T}heorem for {${\rm
  Out}(F_n)$}}, Groups Geom. Dyn. \textbf{5} (2011), no.~1, 39--106.
  \MR{2763779 (2012b:20061)}

\bibitem[Ger87]{g87}
SM~Gersten, \emph{Fixed points of automorphisms of free groups}, advances in
  Mathematics \textbf{64} (1987), no.~1, 51--85.

\bibitem[GJLL98]{gjll}
D.~Gaboriau, A.~Jaeger, G.~Levitt, and M.~Lustig, \emph{An index for counting
  fixed points of automorphisms of free groups}, Duke mathematical journal
  \textbf{93} (1998), no.~3, 425--452.

\bibitem[GL95]{gl95}
Damien Gaboriau and Gilbert Levitt, \emph{The rank of actions on {R}-trees},
  Annales scientifiques de l'Ecole normale sup{\'e}rieure, vol.~28,
  Soci{\'e}t{\'e} math{\'e}matique de France, 1995, pp.~549--570.

\bibitem[Gui05]{g05}
Vincent Guirardel, \emph{C{\oe}ur et nombre d'intersection pour les actions de
  groupes sur les arbres}, Annales Scientifiques de l’{\'E}cole Normale
  Sup{\'e}rieure, vol.~38, Elsevier, 2005, pp.~847--888.

\bibitem[HM07]{hm07}
M.~Handel and L.~Mosher, \emph{Parageometric outer automorphisms of free
  groups}, Transactions of the American Mathematical Society \textbf{359}
  (2007), no.~7, 3153--3184.

\bibitem[HM11]{hm11}
\bysame, \emph{Axes in {O}uter {S}pace}, no. 1004, Amer Mathematical Society,
  2011.

\bibitem[JL09]{jl09}
A.~J{\"a}ger and M.~Lustig, \emph{Free group automorphisms with many fixed
  points at infinity}, arXiv preprint arXiv:0904.1533 (2009).

\bibitem[KZ03]{kz03}
M.~Kontsevich and A.~Zorich, \emph{Connected components of the moduli spaces of
  {A}belian differentials with prescribed singularities}, Inventiones
  mathematic{\ae} \textbf{153} (2003), no.~3, 631--678.

\bibitem[Lan04]{l04}
E.~Lanneau, \emph{Hyperelliptic components of the moduli spaces of quadratic
  differentials with prescribed singularities}, Commentarii Mathematici
  Helvetici \textbf{79} (2004), no.~3, 471--501.

\bibitem[Lan05]{l05}
\bysame, \emph{Connected components of the strata of the moduli spaces of
  quadratic differentials}, arXiv preprint math/0506136 (2005).

\bibitem[LL03]{ll03}
G.~Levitt and M.~Lustig, \emph{Irreducible automorphisms of $ {F}_n$ have
  north--south dynamics on compactified outer space}, Journal of the Institute
  of Mathematics of Jussieu \textbf{2} (2003), no.~01, 59--72.

\bibitem[Mah11]{m11}
J.~Maher, \emph{Random walks on the mapping class group}, Duke Mathematical
  Journal \textbf{156} (2011), no.~3, 429--468.

\bibitem[MS93]{ms93}
H.~Masur and J.~Smillie, \emph{Quadratic differentials with prescribed
  singularities and pseudo-{A}nosov diffeomorphisms}, Commentarii Mathematici
  Helvetici \textbf{68} (1993), no.~1, 289--307.

\bibitem[Pfa12a]{p12a}
C.~Pfaff, \emph{Constructing and {C}lassifying {F}ully {I}rreducible {O}uter
  {A}utomorphisms of {F}ree {G}roups}, Ph.D. thesis, Rutgers University, 2012.

\bibitem[Pfa12b]{p12b}
Catherine Pfaff, \emph{Ideal {W}hitehead {G}raphs in {${\rm Out}(F_r)$} {I}:
  {S}ome {U}nachieved {G}raphs}, To appear in New York Journal of Mathematics,
  arXiv preprint arXiv:1210.5762 (2012).

\bibitem[Pfa13a]{p12c}
C.~Pfaff, \emph{Ideal {W}hitehead {G}raphs in {${\rm Out}(F_r)$} {II}: {T}he
  {C}omplete {G}raph in {E}ach {R}ank}, Journal of Homotopy and Related
  Structures \textbf{10.1007/s40062-013-0060-5} (2013).

\bibitem[Pfa13b]{p12d}
Catherine Pfaff, \emph{Ideal {W}hitehead {G}raphs in {${\rm Out}(F_r)$} {III}:
  {A}chieved {G}raphs in {R}ank 3}, arXiv preprint arXiv:1301.7080 (2013).

\bibitem[Zor10]{z10}
A.~Zorich, \emph{Explicit {J}enkins-{S}trebel representatives of all strata of
  {A}belian and quadratic differentials}, arXiv preprint arXiv:1011.0395
  (2010).

\end{thebibliography}

\end{document}